\documentclass[12pt,lenq]{amsart}
\usepackage{amsmath}
\usepackage{amscd}
\usepackage{amssymb}
\usepackage{amsbsy}
\usepackage{amsfonts}
\usepackage{latexsym}
\usepackage{graphics}
\usepackage{graphicx}
\usepackage{amsmath,amscd,latexsym}
\usepackage{multirow}
\usepackage{array}
\usepackage{paralist}
\usepackage{titletoc}

\pagestyle{plain}

 \setlength{\textheight}{19cm}
 \setlength{\textwidth}{5.5in}
 \hoffset -0.50  true cm
 \voffset 1.00  true cm

\theoremstyle{plain}
\newtheorem{theorem}{Theorem}[section]
\newtheorem{proposition}[theorem]{Proposition}
\newtheorem{lemma}[theorem]{Lemma}
\newtheorem{corollary}[theorem]{Corollary}
\newtheorem{remark}[theorem]{Remark}
\newtheorem{definition}[theorem]{Definition}

\newtheorem{main theorem}[theorem]{Main Theorem}

\newcommand{\ZZ}{\mathbb{Z}}
\newcommand{\QQ}{\mathbb{Q}}
\newcommand{\RR}{\mathbb{R}}

\newcommand{\HH}{\mathbb{H}}
\newcommand{\QQQ}{\hat{\mathbb{Q}}}
\newcommand{\RRR}{\hat{\mathbb{R}}}

\newcommand{\Conway}{\mbox{\boldmath$S$}^{2}}
\newcommand{\Conways}
{(\mbox{\boldmath$S$}^{2},\mbox{\boldmath$P$})}
\newcommand{\PP}{\mbox{\boldmath$P$}}
\newcommand{\PConway}{\mbox{\boldmath$S$}}

\newcommand{\DD}{\mathcal{D}}
\newcommand{\RGPP}[1]{\hat\Gamma_{#1}}
\newcommand{\RGP}[1]{\Gamma_{#1}}

\newcommand{\svert}{\,|\,}

\newcommand{\llangle}{\langle\langle}
\newcommand{\rrangle}{\rangle\rangle}

\newcommand{\lp}{(\hskip -0.07cm (}
\newcommand{\rp}{)\hskip -0.07cm )}

\begin{document}

\title[A new proof for small cancellation conditions of 2-bridge link groups]
{A new proof for small cancellation conditions of 2-bridge link groups}

\author[Daewa Kim]{Daewa Kim}
\address{Department of Mathematics\\
Pusan National University \\
San-30 Jangjeon-Dong, Geumjung-Gu, Pusan, 609-735, Korea}
\email{happybug21@hanmail.net}

\author[Donghi Lee]{Donghi Lee}
\address{Department of Mathematics\\
Pusan National University \\
San-30 Jangjeon-Dong, Geumjung-Gu, Pusan, 609-735, Korea}
\email{donghi@pusan.ac.kr}

\subjclass[2010]{Primary 57M25, 20F06 \\
\indent {This work was supported by a 2-Year Research Grant of Pusan National University.}}

\begin{abstract}
In this paper, we give a simple proof
for the small cancellation conditions of
the upper presentations of 2-bridge link groups,
which holds the key to the proof of the main result of [1].
We also give an alternative proof of the main result of
[1] using transfinite induction.
\end{abstract}

\maketitle

\section{Introduction}
\label{sec:introduction}

In \cite{lee_sakuma}, the second author and M. Sakuma
gave a complete characterization of
those essential simple loops in a $2$-bridge sphere of a $2$-bridge link which are null-homotopic in the link complement,
and by using the result, they described all upper-meridian-pair-preserving epimorphisms between $2$-bridge link groups.
The main purpose of this paper is to give a simple proof
for the small cancellation conditions of
the upper presentations of 2-bridge link groups,
which holds the key to the proof of the main result of [1].
We also give an alternative proof of the main result of
[1] using transfinite induction.
It is well-known that $2$-bridge links, $K(r)$, are parametrized by extended rational numbers, $r$,
and that by Shubert's classification of $2$-bridge links ~\cite{Schubert}, it suffices to consider $K(r)$ for $r=\infty$ or $0<r\le 1$.
Here if $r=\infty$ or $r=1$, then $K(r)$ becomes a trivial $2$-component link or
a trivial knot, respectively. Since these trivial cases are easy to treat for our purpose (see \cite[Section ~7]{lee_sakuma}),
we may assume $0<r<1$. Then such a rational number $r$ is uniquely expressed in the following continued fraction expansion:
\begin{center}
\begin{picture}(230,70)
\put(0,48){$\displaystyle{
r=
\cfrac{1}{m_1+
\cfrac{1}{ \raisebox{-5pt}[0pt][0pt]{$m_2 \, + \, $}
\raisebox{-10pt}[0pt][0pt]{$\, \ddots \ $}
\raisebox{-12pt}[0pt][0pt]{$+ \, \cfrac{1}{m_k}$}
}} \
=:[m_1,m_2, \dots,m_k],}$}
\end{picture}
\end{center}
where $k \ge 1$, $(m_1, \dots, m_k) \in (\mathbb{Z}_+)^k$, and $m_k \ge 2$.

In \cite{lee_sakuma}, the proofs of key lemmas and propositions such as Lemma ~7.3 and Propositions ~4.3, 4.4 and 4.5
proceed by induction on $k$, the length of the continued fraction expansion of $r$,
where a rational number $\tilde{r}$ defined by $\tilde{r}=[m_2-1, \dots,m_k]$ if $m_2 \ge 2$ and $\tilde{r}=[m_3, \dots,m_k]$ if $m_2=1$
plays an important role as a predecessor of $r=[m_1,m_2, \dots,m_k]$ (see \cite[Proposition ~4.4]{lee_sakuma}).

However, in this paper, we define a well-ordering $\preceq$ on the set of rational numbers
greater than $0$ and less than $1$ (see Definition ~\ref{def:well-ordering}),
and then prove key lemmas and propositions such as Lemmas ~\ref{lem:properties} and \ref{lem:connection},
and Propositions ~\ref{prop:CS-sequence} and \ref{prop:decomposition}
using transfinite induction with respect to $\preceq$,
where a rational number $\tilde{r}$ defined by $\tilde{r}=[m_1-1, \dots,m_k]$
if $m_1 \ge 2$ or $\tilde{r}=[m_2+1, \dots,m_k]$ if $m_1=1$ plays a role
as a predecessor of $r=[m_1,m_2, \dots,m_k]$ (see Lemma ~\ref{lem:inductive_step}).
Note that having a smaller gap between $r$ and $\tilde{r}$ than in \cite{lee_sakuma}
makes the proof less complicated.

This paper is organized as follows.
In Section ~\ref{sec:main_statement},
we describe the main statement that we are going to re-prove in the present paper.
In Section ~\ref{sec:upper_presentation},
we recall the upper presentation of a 2-bridge link group.
In Section ~\ref{sec:new_proof_small_cancellation},
we re-prove key lemmas and propositions with some modification, if necessary,
to the original statements established in \cite{lee_sakuma}.
Finally, Section ~\ref{sec:new_proof_theorem} is devoted to
a new proof of the main theorem.

\section{Main statement}
\label{sec:main_statement}

For a rational number $r \in \QQQ:=\QQ\cup\{\infty\}$,
let $K(r)$ be the $2$-bridge link of slope $r$,
which is defined as the sum
$(S^3,K(r))=(B^3,t(\infty))\cup (B^3,t(r))$
of rational tangles of slope $\infty$ and $r$.
The common boundary $\partial (B^3,t(\infty))= \partial (B^3,t(r))$
of the rational tangles is identified
with the {\it Conway sphere} $\Conways:=(\RR^2,\ZZ^2)/H$,
where $H$ is the group of isometries
of the Euclidean plane $\RR^2$
generated by the $\pi$-rotations around
the points in the lattice $\ZZ^2$.
Let $\PConway$ be the $4$-punctured sphere $\Conway-\PP$
in the link complement $S^3-K(r)$.
Any essential simple loop in $\PConway$, up to isotopy, is obtained as
the image of a line of slope $s\in\QQQ$ in $\RR^2-\ZZ^2$
by the covering projection onto $\PConway$.
The (unoriented) essential simple loop in $\PConway$ so obtained
is denoted by $\alpha_s$.
We also denote by $\alpha_s$ the conjugacy class of
an element of $\pi_1(\PConway)$
represented by (a suitably oriented) $\alpha_s$.
Then the {\it link group} $G(K(r)):=\pi_1(S^3-K(r))$
is identified with
$\pi_1(\PConway)/ \llangle\alpha_{\infty},\alpha_r\rrangle$,
where $\llangle\cdot \rrangle$ denotes the normal closure.

Let $\DD$ be the {\it Farey tessellation},
whose ideal vertex set is identified with $\QQQ$.
For each $r\in \QQQ$,
let $\RGP{r}$ be the group of automorphisms of
$\DD$ generated by reflections in the edges of $\DD$
with an endpoint $r$, and
let $\RGPP{r}$ be the group generated by $\RGP{r}$ and $\RGP{\infty}$.
Then the region, $R$, bounded by a pair of
Farey edges with an endpoint $\infty$
and a pair of Farey edges with an endpoint $r$
forms a fundamental domain of the action of $\RGPP{r}$ on $\HH^2$
(see Figure ~\ref{fig.fd}).
Let $I_1$ and $I_2$ be the closed intervals in $\RRR$
obtained as the intersection with $\RRR$ of the closure of $R$.
Suppose that $r$ is a rational number with $0<r<1$.
(We may always assume this except when we treat the
trivial knot and the trivial $2$-component link.)
Write $r=[m_1,m_2, \dots,m_k]$,
where $k \ge 1$, $(m_1, \dots, m_k) \in (\mathbb{Z}_+)^k$, and $m_k \ge 2$.
Then the above intervals are given by
$I_1=[0,r_1]$ and $I_2=[r_2,1]$,
where
\begin{align*}
r_1 &=
\begin{cases}
[m_1, m_2, \dots, m_{k-1}] & \mbox{if $k$ is odd,}\\
[m_1, m_2, \dots, m_{k-1}, m_k-1] & \mbox{if $k$ is even,}
\end{cases}\\
r_2 &=
\begin{cases}
[m_1, m_2, \dots, m_{k-1}, m_k-1] & \mbox{if $k$ is odd,}\\
[m_1, m_2, \dots, m_{k-1}] & \mbox{if $k$ is even.}
\end{cases}
\end{align*}

\begin{figure}[h]
\begin{center}
\includegraphics{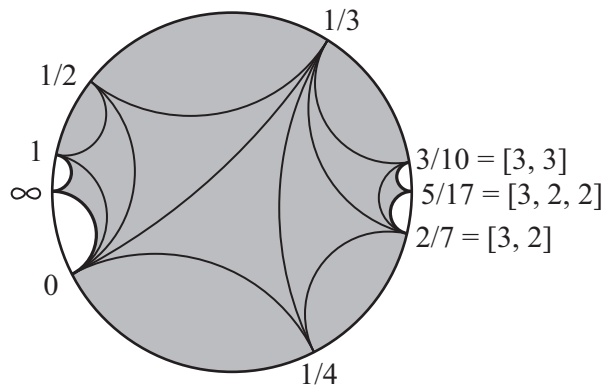}
\end{center}
\caption{\label{fig.fd}
A fundamental domain of $\hat\Gamma_r$ in the
Farey tessellation (the shaded domain) for $r=5/17=[3,2,2]$.}
\end{figure}

We recall the following fact
(\cite[Proposition ~4.6 and Corollary ~4.7]{Ohtsuki-Riley-Sakuma}
and \cite[Lemma ~7.1]{lee_sakuma})
which describes the role of $\RGPP{r}$ in the study of
$2$-bridge link groups.

\begin{proposition}
\label{prop:orbit}
{\rm (1)}
If two elements $s$ and $s'$ of $\QQQ$ belong to the same orbit $\RGPP{r}$-orbit,
then the unoriented loops $\alpha_s$ and $\alpha_{s'}$ are homotopic in $S^3-K(r)$.

{\rm (2)}
For any $s\in\QQQ$,
there is a unique rational number
$s_0\in I_1\cup I_2\cup \{\infty, r\}$
such that $s$ is contained in the  $\RGPP{r}$-orbit of $s_0$.
In particular, $\alpha_s$ is homotopic to $\alpha_{s_0}$ in
$S^3-K(r)$.
Thus if $s_0\in\{\infty, r\}$, then $\alpha_s$ is null-homotopic
in $S^3-K(r)$.
\end{proposition}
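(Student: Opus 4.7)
The plan is to prove part (1) by reducing to the action of a single generating reflection of $\RGPP{r}$ and exploiting the extendability of certain mapping classes of $\PConway$ over the trivial tangles, and to deduce part (2) directly from the fundamental-domain property of $R$.

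For part (1), I begin by observing that $\RGPP{r}$ is generated by reflections in Farey edges with endpoint $\infty$ or $r$, so it suffices, by induction on the length of a word in these generators, to treat the case in which $g$ is a single such reflection. If $g\in\RGP{\infty}$, then under the identification of $\QQQ$ with isotopy classes of essential simple loops on $\PConway$, the reflection $g$ should correspond to (the class of) a homeomorphism $\phi$ of $\PConway$ which preserves $\alpha_{\infty}$ setwise and satisfies $\phi(\alpha_{s})=\alpha_{g(s)}$. Since $\alpha_{\infty}$ bounds a compression disk for $\PConway$ in the trivial tangle $(B^{3},t(\infty))$, such a $\phi$ extends to a self-homeomorphism of $(B^{3},t(\infty))$ that is isotopic, through homeomorphisms of $B^{3}$, to the identity. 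From this I would conclude that $\alpha_{s}$ and $\alpha_{g(s)}$ are freely isotopic in $B^{3}-t(\infty)$, hence freely homotopic in the larger space $S^{3}-K(r)$. The case $g\in\RGP{r}$ is handled symmetrically, with $(B^{3},t(r))$ playing the role of $(B^{3},t(\infty))$.

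Existence in part (2) follows quickly from the fundamental-domain property of $R$ already recorded in the excerpt: every $\RGPP{r}$-orbit on $\HH^{2}$ meets $\overline{R}$, and $\overline{R}\cap\RRR = I_{1}\cup I_{2}\cup\{\infty,r\}$, so each $s\in\QQQ$ lies in the orbit of some $s_{0}$ in this set, with the homotopy statement then supplied by part (1). For uniqueness, I would observe that the four boundary Farey edges of $R$ are contained in the mirrors of the four generating reflections of $\RGPP{r}$; hence the $\RGPP{r}$-stabilizer of an ideal point on $\partial R$ is generated by those reflections whose mirrors contain it, and no nontrivial element of $\RGPP{r}$ carries one point of $\overline{R}\cap\RRR$ to another. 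Therefore distinct points of $I_{1}\cup I_{2}\cup\{\infty,r\}$ represent distinct $\RGPP{r}$-orbits.

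The main obstacle I anticipate is pinning down the correspondence between Farey reflections and mapping classes of $\PConway$: one needs to verify that the subgroup of the mapping class group of $\PConway$ consisting of classes that extend over $(B^{3},t(\infty))$ acts on the slope set $\QQQ$ exactly as $\RGP{\infty}$, and similarly for $r$. This is a standard but somewhat delicate identification between a surface subgroup defined via handlebody extension and a group of isometries of $\HH^{2}$ defined combinatorially from the Farey tessellation; once it is in place, the rest of the argument is essentially bookkeeping with the fundamental domain $R$.
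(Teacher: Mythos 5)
The first thing to note is that the paper does not prove this proposition: it is recalled verbatim from \cite[Proposition 4.6 and Corollary 4.7]{Ohtsuki-Riley-Sakuma} and \cite[Lemma 7.1]{lee_sakuma}, so there is no internal proof to compare your attempt against. Your outline does, however, reproduce the standard argument of those sources: part (1) is proved there precisely by realizing the generators of $\RGPP{r}$ by mapping classes of $\PConway$ that extend over $(B^3,t(\infty))$ or $(B^3,t(r))$, and part (2) by the strict-fundamental-domain property of the chamber $R$ of the reflection group $\RGPP{r}$ (every orbit meets $\closure{R}\cap\RRR=I_1\cup I_2\cup\{\infty,r\}$ in exactly one point, a property that does extend to ideal points). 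So your route is the intended one rather than a genuinely different one.

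Two points in part (1) need tightening before the outline is a proof. First, a single reflection in a Farey edge acts on $\HH^2$ orientation-reversingly, and the corresponding homeomorphism of $\PConway$ is orientation-reversing, hence never isotopic to the identity; you must either restrict to the orientation-preserving subgroup of $\RGP{\infty}$ (the even translations $s\mapsto s+2n$, realized by powers of the Dehn twist along the compressing disk bounded by $\alpha_{\infty}$) and treat the reflections separately using that the loops are unoriented, or argue at the level of $\pi_1(B^3-t(\infty))$ that the induced automorphism is inner up to inversion. Second, and more substantively, ``isotopic, through homeomorphisms of $B^3$, to the identity'' is not the property you need: an isotopy of $B^3$ that does not preserve $t(\infty)$ may drag $\alpha_s$ across the strings, yielding a free homotopy in $B^3$ but not in $B^3-t(\infty)\subset S^3-K(r)$. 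What is required is that the extension be isotopic to the identity through homeomorphisms of the pair $(B^3,t(\infty))$, or equivalently that it induce an inner automorphism (up to inversion) of the free group $\pi_1(B^3-t(\infty))$. This, together with the identification you yourself flag as the main obstacle --- that the mapping classes extending over the trivial tangle act on slopes exactly as $\RGP{\infty}$ --- is exactly the content of the cited results, so as written your argument is a correct reduction to the literature rather than an independent proof.
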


The following theorem proved in \cite{lee_sakuma} and to be re-proved
in Section ~\ref{sec:new_proof_theorem} of the present paper
shows that the converse to Proposition ~\ref{prop:orbit}(2) also holds.

\begin{theorem}
\label{thm:main_result}
The loop $\alpha_s$ is null-homotopic in $S^3 - K(r)$
if and only if $s$ belongs to the $\RGPP{r}$-orbit of $\infty$ or $r$.
In other words, if $s\in I_1\cup I_2$, then
$\alpha_s$ is not null-homotopic in $S^3-K(r)$.
\end{theorem}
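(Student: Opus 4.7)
The ``if'' direction is already contained in Proposition \ref{prop:orbit}(2), so the content lies in showing: for every $s \in I_1 \cup I_2$, the loop $\alpha_s$ represents a nontrivial element of $G(K(r))$. My plan is to argue by transfinite induction on $r \in \QQ \cap (0,1)$ with respect to the well-ordering $\preceq$ introduced in Definition \ref{def:well-ordering}, using as the key tool the small cancellation conditions of the upper presentation re-proved in Section \ref{sec:new_proof_small_cancellation}.

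The base case is the minimal rational number under $\preceq$, for which $r$ has a short continued fraction expansion and the statement can be checked directly from the explicit form of the upper presentation and of the loops $\alpha_s$. For the inductive step, fix $r=[m_1,\dots,m_k]$ with predecessor $\tilde r$ (namely $[m_1-1,\dots,m_k]$ if $m_1\ge 2$, and $[m_2+1,\dots,m_k]$ if $m_1=1$), and assume the theorem holds for every rational that is $\prec r$. Given $s \in I_1 \cup I_2$, write $\alpha_s$ as a cyclic word $W_s$ in the upper presentation $G(K(r))\cong\langle a,b \svert \alpha_r\rangle$, using the CS-sequence decomposition of Proposition \ref{prop:CS-sequence}. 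If $W_s$ were trivial in $G(K(r))$, then by the small cancellation conditions (Proposition \ref{prop:decomposition} and its consequences, which in our setting play the role of Greendlinger's lemma), some cyclic conjugate of $W_s$ would have to contain a subword which is more than ``half'' of a cyclic conjugate of the relator $\alpha_r^{\pm 1}$.

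The heart of the argument is then to rule this out by comparing the piece structure for $r$ with that for $\tilde r$. The propositions proved in Section \ref{sec:new_proof_small_cancellation} (in particular Lemma \ref{lem:connection} and Lemma \ref{lem:inductive_step}) describe precisely how the CS-sequences, and hence the candidate overlaps with $\alpha_r$, transform under the passage $r\mapsto\tilde r$. Using these, any hypothetical long overlap of $W_s$ with $\alpha_r$ can be pushed to an analogous long overlap in the setting of $\tilde r$, producing an $s'\in I_1'\cup I_2'$ for $K(\tilde r)$ whose loop would be null-homotopic in $S^3-K(\tilde r)$, contradicting the inductive hypothesis.

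I expect the main obstacle to lie in this last step: correctly bookkeeping the transformation between the CS-sequences for $r$ and for $\tilde r$ in the two cases $m_1\ge 2$ and $m_1=1$, and verifying that the ``smaller gap'' between $r$ and $\tilde r$ obtained from the new predecessor (as emphasized in the introduction) really does let the candidate overlap descend to a genuine overlap in the $\tilde r$-picture rather than being destroyed at the boundary of the fundamental domain. Once the bookkeeping is in place, the small cancellation conditions of Section \ref{sec:new_proof_small_cancellation} should close the induction immediately.
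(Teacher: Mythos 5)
There is a genuine architectural gap in your inductive step. You propose to run the transfinite induction on the theorem itself: from a hypothetical null-homotopy of $\alpha_s$ in $S^3-K(r)$ you want to manufacture some $s'\in I_1'\cup I_2'$ whose loop is null-homotopic in $S^3-K(\tilde r)$, contradicting the inductive hypothesis for $\tilde r$. But nothing in the paper (or in your sketch) supplies a mechanism for descending a null-homotopy, or a van Kampen diagram over $\langle a,b \svert u_r\rangle$, to one over $\langle a,b \svert u_{\tilde r}\rangle$: the passage $r\mapsto\tilde r$ of Lemma~\ref{lem:inductive_step} and Proposition~\ref{prop:CS-sequence} transforms \emph{cyclic $S$-sequences}, not elements of the link groups, and there is no homomorphism $G(K(r))\to G(K(\tilde r))$ in play here. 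The step you yourself flag as ``the main obstacle'' is exactly the step that cannot be carried out with these tools. The paper avoids the problem by putting the transfinite induction in a different place: the group-theoretic input (small cancellation, van Kampen's lemma) is invoked only once, for $r$ itself, yielding the purely combinatorial conclusion of Corollary~\ref{cor:identity} that $CS(s)$ contains $(S_1,S_2)$ or $(S_2,S_1)$ (or a large term when $k=1$). The transfinite induction is then applied to the entirely combinatorial Lemma~\ref{lem:connection}, which converts that condition on $CS(s)$ into constraints on the continued fraction expansion of $s$; Remark~\ref{rem:connection} identifies those constraints with $s\in(r_1,r_2)$, i.e.\ $s\notin I_1\cup I_2$. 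No transfer of homotopies between different link complements is ever needed.

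Two smaller points. First, your base case is underspecified: the $\preceq$-minimal slopes are handled via $CS(r)=\lp m_1,m_1\rp$ and condition (i) of Lemma~\ref{lem:connection}, not by a direct inspection of loops, and the cases $r=\infty$, $r=1$ are excluded at the outset. Second, you omit the case $s=0\in I_1$, which genuinely requires separate treatment: Corollary~\ref{cor:identity} applies only for $0<s\le 1$, and for $s=0$ one must argue directly that the length-two word $u_0=ab$ cannot contain a subword of $(u_r^{\pm1})$ of length greater than $p\ge 2$.
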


\section{Upper presentations of 2-bridge link groups}
\label{sec:upper_presentation}

Throughout this paper,
the set $\{a,b\}$ denotes the standard meridian-generator
of the rank $2$ free group $\pi_1(B^3-t(\infty))$,
which is specified as in \cite[Section ~3]{lee_sakuma}.
For a positive rational number $q/p$, where $p$ and $q$ are relatively prime positive integers,
let $u_r$ be the word in $\{a,b\}$ obtained as follows.
(For a geometric description, see \cite[Remark ~1]{lee_sakuma}.)
Set $\epsilon_i = (-1)^{\lfloor iq/p \rfloor}$,
where $\lfloor x \rfloor$ is the greatest integer not exceeding $x$.
\begin{enumerate}[\indent \rm (1)]
\item If $p$ is odd, then
\[u_{q/p}=a\hat{u}_{q/p}b^{(-1)^q}\hat{u}_{q/p}^{-1},\]
where
$\hat{u}_{q/p} = b^{\epsilon_1} a^{\epsilon_2} \cdots b^{\epsilon_{p-2}} a^{\epsilon_{p-1}}$.
\item If $p$ is even, then
\[u_{q/p}=a\hat{u}_{q/p}a^{-1}\hat{u}_{q/p}^{-1},\]
where
$\hat{u}_{q/p} = b^{\epsilon_1} a^{\epsilon_2} \cdots a^{\epsilon_{p-2}} b^{\epsilon_{p-1}}$.
\end{enumerate}
Then $u_r\in F(a,b)\cong\pi_1(B^3-t(\infty))$ is represented by the simple loop $\alpha_r$,
and the link group $G(K(r))$ with $r>0$ has the following presentation,
called the {\it upper presentation}:
\[
\begin{aligned}
G(K(r))&=\pi_1(S^3-K(r))\cong\pi_1(B^3-t(\infty))/\llangle \alpha_r\rrangle \\
&\cong F(a, b)/ \llangle u_r \rrangle
\cong \langle a, b \, | \, u_r \rangle.
\end{aligned}
\]

We recall the definition of the sequence $S(r)$ and the cyclic sequence $CS(r)$
of slope $r$ defined in \cite{lee_sakuma},
both of which are read from the single relator $u_r$ of the upper presentation of $G(K(r))$.
We first fix some definitions and notation.
Let $X$ be a set.
By a {\it word} in $X$, we mean a finite sequence
$x_1^{\epsilon_1}x_2^{\epsilon_2}\cdots x_n^{\epsilon_n}$
where $x_i\in X$ and $\epsilon_i=\pm1$.
Here we call $x_i^{\epsilon_i}$ the {\it $i$-th letter} of the word.
For two words $u, v$ in $X$, by
$u \equiv v$ we denote the {\it visual equality} of $u$ and
$v$, meaning that if $u=x_1^{\epsilon_1} \cdots x_n^{\epsilon_n}$
and $v=y_1^{\delta_1} \cdots y_m^{\delta_m}$
($x_i, y_j \in X$; $\epsilon_i, \delta_j=\pm 1$),
then $n=m$ and $x_i=y_i$ and $\epsilon_i=\delta_i$ for each
$i=1, \dots, n$.
The length of a word $v$ is denoted by $|v|$.
A word $v$ in $X$ is said to be {\it reduced}
if $v$ does not contain $xx^{-1}$ or $x^{-1}x$ for any $x \in X$.
A word is said to be {\it cyclically reduced}
if all its cyclic permutations are reduced.
A {\it cyclic word} is defined to be the set of
all cyclic permutations of a cyclically reduced word.
By $(v)$ we denote the cyclic word associated with a
cyclically reduced word $v$.
Also by $(u) \equiv (v)$ we mean the {\it visual equality} of two cyclic words
$(u)$ and $(v)$. In fact, $(u) \equiv (v)$ if and only if $v$ is visually a cyclic shift
of $u$.

\begin{definition}
\label{def:alternating}
{\rm (1) Let $v$ be a nonempty reduced word in
$\{a,b\}$. Decompose $v$ into
\[
v \equiv v_1 v_2 \cdots v_t,
\]
where, for each $i=1, \dots, t-1$, all letters in $v_i$ have positive (resp., negative) exponents,
and all letters in $v_{i+1}$ have negative (resp., positive) exponents.
Then the sequence of positive integers
$S(v):=(|v_1|, |v_2|, \dots, |v_t|)$ is called the {\it $S$-sequence of $v$}.

(2) Let $(v)$ be a nonempty cyclic word in
$\{a, b\}$. Decompose $(v)$ into
\[
(v) \equiv (v_1 v_2 \cdots v_t),
\]
where all letters in $v_i$ have positive (resp., negative) exponents,
and all letters in $v_{i+1}$ have negative (resp., positive) exponents (taking
subindices modulo $t$). Then the {\it cyclic} sequence of positive integers
$CS(v):=\lp |v_1|, |v_2|, \dots, |v_t| \rp$ is called
the {\it cyclic $S$-sequence of $(v)$}.
Here, the double parentheses denote that the sequence is considered modulo
cyclic permutations.

(3) A nonempty reduced word $v$ in $\{a,b\}$ is said to be {\it alternating}
if $a^{\pm 1}$ and $b^{\pm 1}$ appear in $v$ alternately,
i.e., neither $a^{\pm2}$ nor $b^{\pm2}$ appears in $v$.
A cyclic word $(v)$ is said to be {\it alternating}
if all cyclic permutations of $v$ are alternating.
In the latter case, we also say that $v$ is {\it cyclically alternating}.
}
\end{definition}

\begin{definition}
\label{def4.1(3)}
{\rm
For a rational number $r$ with $0<r\le 1$,
let $G(K(r))=\langle a, b \, | \, u_r \rangle$ be the upper presentation.
Then the symbol $S(r)$ (resp., $CS(r)$) denotes the $S$-sequence $S(u_r)$ of $u_r$
(resp., cyclic $S$-sequence $CS(u_r)$ of $(u_r)$), which is called
the {\it S-sequence of slope $r$}
(resp., the {\it cyclic S-sequence of slope $r$}).}
\end{definition}

The following is cited from \cite{lee_sakuma}. Since its proof in \cite{lee_sakuma}
is irrelevant to the modification to be performed in the present paper,
we adopt the proof as it is.

\begin{lemma}{\cite[Proposition ~4.2]{lee_sakuma}}
\label{lem:half-rotation}
For the positive rational number $r=q/p$,
the sequence $S(r)$ has length $2q$,
and it represents the cyclic sequence $CS(r)$.
Moreover the cyclic sequence $CS(r)$ is invariant by
the half-rotation; that is, if $s_j(r)$ denotes the $j$-th term of $S(r)$
$(1\le j\le 2q)$, then $s_j(r)=s_{q+j}(r)$ for every integer $j$ $(1\le j\le q)$.
\end{lemma}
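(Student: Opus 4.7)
First I would observe that by construction $u_r$ has length $2p$ and is cyclically alternating in the generators $\{a,b\}$: the letters of $\hat u_{q/p}$ strictly alternate between $b^{\pm 1}$ and $a^{\pm 1}$, and the junction letters respect this alternation regardless of the parity of $p$. In particular $u_r$ is cyclically reduced, and the first letter $a^{+1}$ and the last letter $b^{-\epsilon_1}=b^{-1}$ have opposite sign, so $S(r)$ directly represents $CS(r)$. It remains to prove (i) the length of $S(r)$ is $2q$, and (ii) $s_j(r)=s_{q+j}(r)$ for $1\le j\le q$.

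The core of the proof is the following half-rotation identity on the sign sequence $\sigma_1,\ldots,\sigma_{2p}$ of the successive letters of $u_r$:
\[
\sigma_{p+i}=(-1)^q\,\sigma_i\qquad(1\le i\le p).
\]
For $i=1$ this reads $\sigma_{p+1}=(-1)^q$, which holds because the junction letter at position $p+1$ is $b^{(-1)^q}$ when $p$ is odd and $a^{-1}$ when $p$ is even, and in the latter case $\gcd(p,q)=1$ forces $q$ odd. For $2\le i\le p$ one reads $\sigma_i=\epsilon_{i-1}$ off $\hat u_{q/p}$ and $\sigma_{p+i}=-\epsilon_{p-(i-1)}$ off $\hat u_{q/p}^{-1}$, so the identity reduces to
\[
\epsilon_{p-j}=(-1)^{q+1}\epsilon_j\qquad(1\le j\le p-1),
\]
which follows from the elementary equality $\lfloor(p-j)q/p\rfloor=q-1-\lfloor jq/p\rfloor$, valid because $\gcd(p,q)=1$ forces $jq/p\notin\ZZ$.

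Given the identity, $\sigma_i=\sigma_{i+1}$ if and only if $\sigma_{p+i}=\sigma_{p+i+1}$, so the set of sign changes in the cyclic sign pattern is invariant under the half-rotation $i\mapsto i+p$; hence the $S$-sequence has even length $2L$ and is periodic of period $L$, which already delivers $s_j(r)=s_{L+j}(r)$. To pin down $L=q$, I would count sign changes in the first half directly: setting $\epsilon_0:=+1$ and $\epsilon_p:=(-1)^q$, the signs $\sigma_1,\sigma_2,\ldots,\sigma_p,\sigma_{p+1}$ are exactly $\epsilon_0,\epsilon_1,\ldots,\epsilon_p$. Since $0<q<p$ forces each increment $\lfloor iq/p\rfloor-\lfloor(i-1)q/p\rfloor$ to lie in $\{0,1\}$ while the increments sum to $q$, precisely $q$ transitions flip the sign; by the half-rotation identity another $q$ sign changes occur in the second half, so $CS(r)$ has length $2q$.

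The only real nuisance is treating uniformly the parity of $p$ in the definition of $u_r$, since the two cases employ different junction letters ($b^{(-1)^q}$ versus $a^{-1}$); one must confirm that both cases satisfy the clean half-rotation identity above. Everything else is a routine unpacking of the definition of $u_r$ together with the one-line floor-function computation.
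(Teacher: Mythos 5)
Your argument is correct, but note that the paper itself supplies no proof of this lemma: it explicitly states that it ``adopts the proof as it is'' from \cite[Proposition~4.2]{lee_sakuma}, so there is no in-paper argument to compare against, and your self-contained verification is therefore a genuine addition. Reading the sign sequence $\sigma_1,\dots,\sigma_{2p}$ off the definition of $u_{q/p}$, establishing the half-rotation identity $\sigma_{p+i}=(-1)^q\sigma_i$ via $\lfloor (p-j)q/p\rfloor = q-1-\lfloor jq/p\rfloor$ (valid since $\gcd(p,q)=1$), and counting exactly $q$ sign changes in the first half because the increments $\lfloor iq/p\rfloor-\lfloor(i-1)q/p\rfloor$ lie in $\{0,1\}$ and sum to $q$, is all sound --- though the counting step implicitly assumes $q\le p$, which is the only case the paper ever uses ($0<r\le 1$); for $q>p$ the increments need not lie in $\{0,1\}$ and the length-$2q$ claim actually fails, so it is worth stating that restriction. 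One step deserves an extra sentence: from the invariance of the set of sign-change transitions under $i\mapsto i+p$ you conclude that the run-length sequence is periodic of period $L$. Strictly, you should observe that this map, being an order-preserving fixed-point-free involution of the cyclically ordered set of $2L$ sign-change transitions, must shift the run index by exactly $L$ rather than $0$, and that the run beginning at position $p+1$ is precisely the $(q+1)$-st term of $S(r)$ --- the latter because the $p$-th increment $\lfloor pq/p\rfloor-\lfloor (p-1)q/p\rfloor$ equals $1$, so a sign change occurs between positions $p$ and $p+1$. With that observation supplied, your proof is complete.
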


\section{New proof for small cancellation conditions of 2-bridge link groups}
\label{sec:new_proof_small_cancellation}

In this section, we give new proofs to several lemmas and propositions
with some modification, if necessary, to the original statements established in \cite[Section ~4]{lee_sakuma}.
These will play crucial roles in the new proof of Theorem ~\ref{thm:main_result}.

In the remainder of this paper unless specified otherwise,
we suppose that $r$ is a rational number
with $0<r \le 1$, and write $r$ as a continued fraction:
\[
r=[m_1,m_2, \dots,m_k],
\]
where $k \ge 1$, $(m_1, \dots, m_k) \in (\mathbb{Z}_+)^k$ and
$m_k \ge 2$ unless $k=1$.

\begin{lemma}
\label{lem:inductive_step}
For a rational number $r=[m_1, m_2, \dots, m_k]$ with $0<r<1$,
let $\tilde{r}$ be a rational number defined as
\[
{\tilde r}=
\begin{cases}
[m_1-1, m_2, m_3, \dots, m_k] & \text{if \ $m_1\ge 2$};\\
[m_2+1, m_3, m_4, \dots, m_k] & \text{if \ $m_1=1$}.
\end{cases}
\]
Then we have
\[
r=
\begin{cases}
\tilde {r}/(1+ \tilde{r}) & \text{if \ $m_1\ge 2$};\\
1-\tilde{r} & \text{if \ $m_1=1$}.
\end{cases}
\]
\end{lemma}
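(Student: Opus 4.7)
The statement is a purely arithmetic identity about the continued fraction expansion, so the plan is essentially a direct verification, carefully splitting on the value of $m_1$.

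First, I would set $\alpha = [m_2, m_3, \dots, m_k]$ (interpreted as $0$ in the degenerate case $k=1$, but since $0<r<1$ and $m_k \ge 2$ when $k\ge 2$, the subcontinued fractions involved are all well-defined positive rationals). By the definition of continued fractions given in the introduction,
\[
r = [m_1, m_2, \dots, m_k] = \frac{1}{m_1 + \alpha}.
\]

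For the case $m_1 \ge 2$, I would observe that $\tilde{r} = [m_1-1, m_2, \dots, m_k] = 1/((m_1-1)+\alpha)$, and then just compute
\[
\frac{\tilde{r}}{1+\tilde{r}} = \frac{1/(m_1-1+\alpha)}{(m_1+\alpha)/(m_1-1+\alpha)} = \frac{1}{m_1+\alpha} = r,
\]
which gives the first identity.

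For the case $m_1 = 1$, I would further introduce $\beta = [m_3, \dots, m_k]$ (or $0$ if $k \le 2$) so that $\alpha = 1/(m_2+\beta)$ and $\tilde{r} = [m_2+1, m_3, \dots, m_k] = 1/(m_2+1+\beta)$. Since $r = 1/(1+\alpha) = (m_2+\beta)/(m_2+\beta+1)$, I can verify
\[
1 - \tilde{r} = \frac{m_2+\beta}{m_2+1+\beta} = r,
\]
which gives the second identity.

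The only mild subtlety (not really an obstacle) is to make sure the convention handles small $k$ correctly: when $k=1$ the hypothesis $m_k \ge 2$ forces $m_1 \ge 2$, so only the first case applies and $\alpha$ is absent; when $k=2$ and $m_1=1$, $\beta$ is absent. In each degenerate situation the formulas above reduce correctly, so no separate argument is needed. There is no real obstacle; this lemma is preparatory and the work is entirely mechanical fraction manipulation.
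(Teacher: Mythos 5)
Your proof is correct and is essentially the same as the paper's: both reduce the claim to the identity $r=1/(m_1+[m_2,\dots,m_k])$ and verify the two cases by direct fraction manipulation (the paper works with $a=1/\tilde r$ and $b=1/\tilde r-1$ where you work with the tails $\alpha$ and $\beta$, which is only a cosmetic difference). Your closing remark on the degenerate cases $k=1$ and $k=2$ is a minor point the paper leaves implicit, and it is handled correctly.
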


\begin{proof}
If $m_1\ge 2$, then letting $a:=1/\tilde{r}=m_1-1+[m_2, \dots, m_k]$
we have
\[
r=[m_1, m_2, \dots, m_k]=1/(1+a)=1/(1+1/\tilde{r})=\tilde{r}/(1+\tilde{r}),
\]
as required.

On the other hand, if $m_1=1$, then letting $b:=1/\tilde{r}-1=m_2+[m_3, \dots, m_k]$
we have
\[
r=[m_1, m_2, \dots, m_k]=1/(1+1/b)=1/(1+\tilde{r}/(1-\tilde{r}))
=1-\tilde{r},
\]
as required.
\end{proof}

\begin{proposition}
\label{prop:CS-sequence}
For a rational number $r=[m_1, m_2, \dots, m_k]$ with $0<r<1$,
let $\tilde{r}$ be a rational number defined as in
Lemma ~\ref{lem:inductive_step}.
Put $CS({\tilde r})=\lp a_1, a_2, \dots, a_t, a_1, a_2, \dots, a_t \rp$.
Then the following hold.
\begin{enumerate}[\indent \rm (1)]
\item If $m_1\ge 2$, then
\[CS(r)=\lp a_1+1, a_2,+1, \dots, a_t+1, a_1+1, a_2+1, \dots, a_t+1 \rp.\]

\item If $m_1=1$, then every $a_i$ is at least $2$, and either
\[CS(r)=\lp 2, b_1 \langle 1 \rangle, 2, b_2 \langle 1 \rangle, \dots, 2, b_t \langle 1 \rangle,
2, b_1 \langle 1 \rangle, 2, b_2 \langle 1 \rangle, \dots, 2,b_t \langle 1 \rangle \rp\]
or
\[CS(r)=\lp 2, b_t \langle 1 \rangle, \dots, 2, b_2 \langle 1 \rangle, 2, b_1 \langle 1 \rangle,
2, b_t \langle 1 \rangle, \dots, 2, b_2 \langle 1 \rangle, 2,b_1 \langle 1 \rangle \rp,\]
where $b_i=a_i-2$ for every $i$, and the symbol ``$b_i \langle 1 \rangle$'' represents $b_i$ successive $1$'s.
{\rm (}Here if $b_i=0$ for some $i$, then $b_i \langle 1 \rangle$ represents the empty subsequence.{\rm )}
\end{enumerate}
\end{proposition}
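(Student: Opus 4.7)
The plan is to read off both cyclic $S$-sequences directly from the explicit formula $u_r=a\,\hat u_r\,c\,\hat u_r^{-1}$, where $\hat u_r=b^{\epsilon_1}a^{\epsilon_2}b^{\epsilon_3}\cdots$ is cyclically alternating and $c\in\{a^{-1},b^{\pm 1}\}$ is the cap determined by the parities of $p$ and $q$, and then to relate the two sign sequences of $r$ and $\tilde r$ via Lemma \ref{lem:inductive_step}. Write $\tilde r=\tilde q/\tilde p$ and $r=q/p$ in lowest terms. Since $(u_r)$ is cyclically alternating, its $CS$-sequence is determined entirely by $(\epsilon_1,\dots,\epsilon_{p-1})$ together with the signs of the initial $a$ and of $c$, and the half-rotation symmetry of Lemma \ref{lem:half-rotation} lets me concentrate on the $q$ entries coming from the half-word $a\,\hat u_r$. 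A standing observation, valid whenever $0<r<1$, is $\epsilon_1(r)=1$, so the initial $a$ always merges with the first run of $\hat u_r$; using $\lfloor (p-1)q/p\rfloor=q-1$ and checking the two parity cases for $p$, one sees that $\epsilon_{p-1}(r)$ and the cap sign also always line up, so the cap always merges with the first letter of $\hat u_r^{-1}$.

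For Case 1 ($m_1\ge 2$), Lemma \ref{lem:inductive_step} gives $p=\tilde p+\tilde q$ and $q=\tilde q$. The $k$th sign change of $\epsilon(\cdot)$ is located at position $\lceil kp/q\rceil$, and the identity $\lceil kp/q\rceil=\lceil k\tilde p/\tilde q\rceil+k$ shows that each of the $\tilde q$ maximal constant runs in $(\tilde\epsilon_1,\dots,\tilde\epsilon_{\tilde p-1})$ is lengthened by exactly one same-sign entry upon passing to $(\epsilon_1(r),\dots,\epsilon_{p-1}(r))$. Because the boundary behaviour at both ends of $\hat u_r$ is controlled by the standing observation above (and is thus the same for $r$ and $\tilde r$), every block length $a_i$ of $CS(\tilde r)$ is promoted to $a_i+1$ in $CS(r)$, which is exactly the claimed formula.

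For Case 2 ($m_1=1$), we have $p=\tilde p$ and $q=\tilde p-\tilde q$. Since $\gcd(\tilde p,\tilde q)=1$ keeps $i\tilde q/\tilde p$ out of $\mathbb{Z}$ for $1\le i\le p-1$, a short computation yields $\epsilon_i(r)=(-1)^{\lfloor i(\tilde p-\tilde q)/\tilde p\rfloor}=(-1)^{i+1}\tilde\epsilon_i$, so signs are preserved at odd $i$ and flipped at even $i$. Two position-parity checks then follow at once: within any maximal constant run of $\tilde\epsilon$, consecutive signs in $\epsilon(r)$ are opposite; at any boundary between two adjacent runs of $\tilde\epsilon$, the two straddling signs in $\epsilon(r)$ are equal. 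Hence each original run of length $\ell\ge 2$ is shattered into $\ell-2$ alternating singletons, each boundary produces a length-$2$ run shared with its neighbouring original run, and translating back from $\hat u_r$ to $u_r$ via the initial $a$ and the cap converts each entry $a_i$ of $CS(\tilde r)$ into the pattern ``$2,b_i\langle 1\rangle$'' or ``$b_i\langle 1\rangle,2$'' with $b_i=a_i-2$, according to the parity of the starting index of the corresponding run. The two global shapes stated in the proposition correspond to these two parity choices, and the lower bound $a_i\ge 2$ is maintained through the induction: when the $m_1=1$ branch is entered, the predecessor $\tilde r=[m_2+1,m_3,\dots,m_k]$ has leading coefficient $\ge 2$, and the $a_i\mapsto a_i+1$ rule of Case 1 obviously preserves this bound.

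The main obstacle is the seam analysis: at the two ends of $\hat u_r$ and at the cap $c$, the leading $a$, the cap, and the extreme signs $\epsilon_1$ and $\epsilon_{p-1}$ can merge or separate adjacent runs, and the argument must check that the merging rule is the same for $r$ and $\tilde r$ in each case so that the sign-level transformation descends cleanly to the $CS$-level. The standing observation of the first paragraph disposes of most of this, but I would formalise the remaining parity-dependent identities (for instance, $\epsilon_{p-1}=\pm\epsilon_1$ depending on the parity of $p$) in a short preliminary lemma. Base cases $r=1/m_1$ are direct from the explicit formula for $u_{1/m_1}$, and the whole argument is packaged as transfinite induction on the well-ordering $\preceq$.
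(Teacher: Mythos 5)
Your proposal is correct in substance, and for part (2) it takes a genuinely different route from the paper. For part (1) the two arguments are essentially the same arithmetic: the paper invokes the closed formula $s_j(r)=\lfloor j(1/r)\rfloor_*-\lfloor (j-1)(1/r)\rfloor_*$ from \cite{lee_sakuma} and computes $s_j(r)=1+s_j(\tilde r)$ directly from $1/r=1/\tilde r+1$, while you track the sign-change positions $\lceil kp/q\rceil$; these are two phrasings of one computation, and neither needs induction. For part (2), however, the paper argues topologically: the composition $f=f_2f_1$ of the reflection in the horizontal disk and the half Dehn twist along the vertical disk induces $(a,b)\mapsto(a,b^{-1})$ and carries $\alpha_{\tilde r}$ to $\alpha_r$, so $(u_r^{\pm1})$ is obtained from $(u_{\tilde r})$ by substituting $b\mapsto b^{-1}$, and the effect on $S$-sequences, $(1,a_i,1)\mapsto(2,(a_i-2)\langle 1\rangle,2)$, is read off at once; the two displayed shapes come from the $(u_r)$-versus-$(u_r^{-1})$ ambiguity. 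Your replacement --- the identity $\epsilon_i(r)=(-1)^{i+1}\tilde\epsilon_i$ derived from $r=(\tilde p-\tilde q)/\tilde p$, together with the two parity checks showing that runs of $\tilde\epsilon$ alternate internally while agreeing across run boundaries --- is a purely arithmetic substitute for the symmetry argument and does yield the same block structure (it checks out, e.g., for $\tilde r=1/3$, $r=2/3$). The paper's version is shorter and avoids all boundary issues because the substitution acts on the whole cyclic word at once; yours is self-contained and topology-free but must carry out the seam analysis at the initial $a$, the cap, and positions $1$ and $p-1$ explicitly, which you correctly isolate and for which your standing observation ($\epsilon_1=1$; the cap merges with the first letter of $\hat u_r^{-1}$) is the right tool. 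Two small points to tighten: the claim that every $a_i\ge 2$ should be justified as in the paper, by applying part (1) to $\tilde r=[m_2+1,m_3,\dots,m_k]$ itself (its leading coefficient is at least $2$, so $CS(\tilde r)$ is obtained from the $CS$-sequence of its own predecessor by adding $1$ to every term, whence every term is at least $2$) --- your phrase ``preserves this bound'' slightly misstates this; and the transfinite-induction packaging is superfluous for this proposition, since both cases are direct computations relating $r$ to $\tilde r$.
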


\begin{proof}
(1) Let $m_1\ge 2$. Write $\tilde{r}=q/p$, where $p$ and $q$ are relatively prime positive integers.
By Lemma ~\ref{lem:inductive_step}, $r=\tilde{r}/(1+\tilde{r})=q/(p+q)$.
It then follows from Lemma ~\ref{lem:half-rotation}
that both the sequences $S(r)$ and $S(\tilde{r})$, and hence both the cyclic sequences $CS(r)$ and $CS(\tilde{r})$,
have the same length $2q$.
Recall from \cite[Lemma ~4.8]{lee_sakuma} that
if $s_j(r)$ denotes the $j$-th term of the sequence $S(r)$, then
$s_j(r)=\lfloor j(1/r)\rfloor_*-\lfloor (j-1)(1/r)\rfloor_*$,
where $\lfloor x \rfloor_*$ is the greatest integer smaller than $x$.
Since $r=\tilde{r}/(1+\tilde{r})=1/(1/\tilde{r}+1)$, we have
\begin{align*}
s_j(r)
&=\lfloor j(1/r)\rfloor_*-\lfloor (j-1)(1/r)\rfloor_*\\
&=\lfloor j(1/\tilde{r}+1)\rfloor_* -
\lfloor (j-1)(1/\tilde{r}+1)\rfloor_*\\
&=(\lfloor j(1 / \tilde{r}) \rfloor_*+j) -
(\lfloor (j-1)(1 / \tilde{r})\rfloor_*+(j-1))\\
&=1+\lfloor j(1 / \tilde{r}) \rfloor_* -
\lfloor (j-1)(1 / \tilde{r})\rfloor_*\\
&=1+s_j({\tilde{r}}),
\end{align*}
where $s_j(\tilde{r})$ denotes the $j$-th term of the sequence $S(\tilde{r})$,
and hence the assertion follows.

(2) Let $m_1=1$. Then $\tilde{r}=[m_2+1, m_3, \dots, m_k]$ and $r=1-\tilde{r}$ by Lemma ~\ref{lem:inductive_step}.
Since $m_2+1 \ge 2$, (1) implies that every term of $CS(\tilde{r})$ is at least $2$,
that is, every $a_i$ is at least $2$.

To prove the remaining assertion,
let $f_1$ be the reflection of $(B^3,t(\infty))$
in a ``horizontal'' disk bounded by $\alpha_0$,
and let $f_2$ be the half Dehn twist of $(B^3,t(\infty))$
along the ``vertical'' disk bounded by $\alpha_{\infty}$.
Then the automorphisms $(f_i)_*$ of
$\pi_1(B^3-t(\infty))=F(a,b)$
induced by $f_i$ are given by
\[
(f_1)_*(a,b)=(a,b) \quad (f_2)_*(a,b)=(a,b^{-1})
\]
Let $f$ be the composition $f_2f_1$.
Then by the above observation, we have $f_*(a,b)=(a,b^{-1})$.
On the other hand, $f$ maps $\alpha_r$
to $f_2(f_1(\alpha_r))=f_2(\alpha_{-r})=\alpha_{1-r}=\alpha_{\tilde r}$.
Thus $f_*$ sends the cyclic word $(u_r)$ to the cyclic word
$(u_{\tilde r})$ or $(u_{\tilde r}^{-1})$.
Since $f_*^2=1$, this implies that
$f_*$ sends the cyclic word $(u_{\tilde r})$ to the cyclic word
$(u_r)$ or $(u_r^{-1})$.
Thus the cyclic word
$(u_r)$ or $(u_r^{-1})$ is obtained from
$(u_{\tilde r})$ by replacing $b$ with $b^{-1}$.
In this process, a subword, $w$, of $(u_{\tilde{r}})$ with $S(w)=(1, a_i, 1)$,
say, $w=b^{-1}(abab \cdots ab)a^{-1}$ or $b^{-1}(abab \cdots a)b^{-1}$
according to whether $a_i$ is even or odd,
is transformed to a subword $w'=b(ab^{-1}ab^{-1} \cdots ab^{-1})a^{-1}$ or $b(ab^{-1}ab^{-1} \cdots a)b$,
respectively, of $(u_r^{\pm 1})$ with $S(w')=(2, (a_i-2)\langle 1 \rangle, 2)$.
Since the cyclic sequence $CS(u_r^{-1})$ is the reverse of the cyclic sequence $CS(u_r)=CS(r)$,
the assertion now follows.
\end{proof}

Throughout the remainder of this paper, we assume the following well-ordering $\preceq$.

\begin{definition}
\label{def:well-ordering}
{\rm Let $\mathfrak{A}$ be the set of all rational numbers greater than $0$ and less than or equal to $1$.
We define a well-ordering $\preceq$ on $\mathfrak{A}$ by $r_1 \preceq r_2$
if and only if one of the following conditions holds,
where $r_1=[l_1,l_2, \dots,l_h]$ and $r_2=[n_1,n_2, \dots,n_t]$.
\begin{enumerate}[\indent \rm (i)]
\item
$h<t$.
\item
$h=t$ and there is a positive integer $j\le h=t$ such that
$l_i=n_i$ for every $i<j$ and $l_j\le n_j$.
\end{enumerate}
}
\end{definition}
It should be noted that a rational number $\tilde{r}$
defined in Lemma ~\ref{lem:inductive_step} is
a predecessor of $r=[m_1,m_2, \dots,m_k]$ with respect to $\preceq$.

Now we are able to give a new proof to the following lemma
whose statement is a part of \cite[Proposition ~4.3]{lee_sakuma}.
Note that the remaining part of \cite[Proposition ~4.3]{lee_sakuma}
is not necessary in the present paper.

\begin{lemma}
\label{lem:properties}
For a rational number $r=[m_1,m_2, \dots,m_k]$, we have the following.
\begin{enumerate}[\indent \rm (1)]
\item Suppose $k=1$, i.e., $r=1/m_1$.
Then $CS(r)= \lp m_1,m_1 \rp$.

\item Suppose $k\ge 2$. Then $CS(r)$ consists of $m_1$ and $m_1+1$.
\end{enumerate}
\end{lemma}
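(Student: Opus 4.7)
The plan is to handle (1) by a direct unpacking of the definition of $u_r$ from Section~\ref{sec:upper_presentation}, and then to prove (2) by transfinite induction on $r$ with respect to $\preceq$, invoking Proposition~\ref{prop:CS-sequence} at the inductive step.

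For (1), with $r=1/m_1$ one has $p=m_1$ and $q=1$ in the formulas for $u_r$. Since $\lfloor i/m_1\rfloor=0$ for $1\le i\le m_1-1$, every exponent $\epsilon_i$ equals $+1$, so $\hat u_r$ is the all-positive alternating word of length $m_1-1$ beginning with $b$. Thus $u_r=a\hat u_r\, c\,\hat u_r^{-1}$ with $c=b^{-1}$ or $a^{-1}$ according to the parity of $m_1$; in either case $a\hat u_r$ is a reduced alternating block of $m_1$ positive letters, followed by a reduced alternating block of $m_1$ negative letters. Hence $S(r)=(m_1,m_1)$ and $CS(r)=\lp m_1,m_1\rp$, as required (this is also consistent with the length $2q=2$ forced by Lemma~\ref{lem:half-rotation}).

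For (2), assume $k\ge 2$ and, as transfinite inductive hypothesis, that the lemma (both (1) and (2)) holds for every rational number $r'\prec r$. Let $\tilde r$ be the predecessor defined in Lemma~\ref{lem:inductive_step}; the remark following Definition~\ref{def:well-ordering} guarantees $\tilde r\prec r$, so the hypothesis is available. If $m_1\ge 2$, then $\tilde r=[m_1-1,m_2,\dots,m_k]$ has length $k\ge 2$, so by hypothesis (2) applied to $\tilde r$, every term of $CS(\tilde r)$ belongs to $\{m_1-1,m_1\}$; Proposition~\ref{prop:CS-sequence}(1) then states that $CS(r)$ is obtained from $CS(\tilde r)$ by incrementing each term, so it consists of $m_1$ and $m_1+1$. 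If instead $m_1=1$, then $\tilde r=[m_2+1,m_3,\dots,m_k]$ has length $k-1$; when $k=2$ the hypothesis (1) gives $CS(\tilde r)=\lp m_2+1,m_2+1\rp$, and when $k\ge 3$ the hypothesis (2) gives that each term of $CS(\tilde r)$ is $m_2+1$ or $m_2+2$. In both situations each $a_i\ge 2$, and Proposition~\ref{prop:CS-sequence}(2) expresses $CS(r)$ using only the entries $2$ and the ``$b_i\langle 1\rangle$'' runs of $1$'s (with $b_i=a_i-2\ge 0$). Hence $CS(r)$ consists only of $1$ and $2$, i.e.\ of $m_1$ and $m_1+1$.

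The argument has no deep step: Proposition~\ref{prop:CS-sequence} already supplies the combinatorial transformation $CS(\tilde r)\rightsquigarrow CS(r)$. The one place requiring care is the bookkeeping in the case $m_1=1$, where the predecessor shrinks in length and one must split according to whether $k=2$ (invoking part~(1) of the induction hypothesis) or $k\ge 3$ (invoking part~(2)); this branching is the only genuine obstacle, and it is dispatched by noting that in either subcase every $a_i$ is at least $2$, so the second formula of Proposition~\ref{prop:CS-sequence}(2) applies cleanly.
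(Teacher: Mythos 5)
Your proof is correct and follows essentially the same route as the paper: transfinite induction with respect to $\preceq$, passing from $\tilde r$ to $r$ via Proposition~\ref{prop:CS-sequence}, with the $m_1=1$ case reduced to the observation that the output of Proposition~\ref{prop:CS-sequence}(2) involves only the entries $1$ and $2$. The only (harmless) difference is that you establish part (1) directly from the definition of $u_{1/m_1}$, whereas the paper folds it into the induction with base case $r=[1]$ and recovers $CS([m_1])=\lp m_1,m_1\rp$ from $CS([m_1-1])$ by incrementing.
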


\begin{proof}
We prove (1) and (2) together by transfinite induction with respect to the well-ordering $\preceq$
defined in Definition ~\ref{def:well-ordering}.
The base step is the case when $r=[1]$. In this case, $u_r=ab^{-1}$, and so $CS(r)=\lp 1, 1 \rp$,
as desired. To prove the inductive step, we consider two cases separately.

\medskip
\noindent {\bf Case 1.} {\it $m_1 \ge 2$}.
\medskip

In this case, put $\tilde{r}=[m_1-1, m_2, \dots, m_k]$ as in Lemma ~\ref{lem:inductive_step}.
Then clearly $\tilde{r} \prec r$.
By the inductive hypothesis,
$CS(\tilde{r})=\lp m_1-1, m_1-1 \rp$ provided $k=1$, and
$CS(\tilde{r})$ consists of $m_1-1$ and $m_1$ provided $k \ge 2$.
So by Proposition ~\ref{prop:CS-sequence}(1),
$CS(r)=\lp m_1, m_1 \rp$ provided $k=1$, and
$CS(r)$ consists of $m_1$ and $m_1+1$ provided $k \ge 2$, as desired.

\medskip
\noindent {\bf Case 2.} {\it $m_1=1$}.
\medskip

In this case, it immediately follows from
Proposition ~\ref{prop:CS-sequence}(2)
that $CS(r)$ consists of $1=m_1$ and $2=m_1+1$, as desired.
\end{proof}

We also give a new proof to the following proposition
whose statement is precisely the same as \cite[Proposition ~4.5]{lee_sakuma}.

\begin{proposition}
\label{prop:decomposition}
For $r=[m_1, m_2, \dots, m_k]$,
the cyclic sequence $CS(r)$ has a decomposition $\lp S_1, S_2, S_1, S_2 \rp$
which satisfies the following.
\begin{enumerate}[\indent \rm (1)]
\item Each $S_i$ is symmetric, that is, the sequence obtained from $S_i$ by reversing
the order is equal to $S_i$. {\rm ({\it Here, $S_1$ is empty if $k=1$.})}
\item Each $S_i$ occurs only twice on the cyclic sequence $CS(r)$.
\item The subsequence $S_1$ begins and ends with $m_1+1$.
\item The subsequence $S_2$ begins and ends with $m_1$.
\end{enumerate}
\end{proposition}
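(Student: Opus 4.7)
The plan is to prove Proposition \ref{prop:decomposition} by transfinite induction with respect to the well-ordering $\preceq$ of Definition \ref{def:well-ordering}, descending from $r$ to the predecessor $\tilde r$ supplied by Lemma \ref{lem:inductive_step}. The base case $r=[1]$ is immediate: $CS(r)=\lp 1,1\rp$ admits the decomposition $S_1=\text{empty}$, $S_2=(1)$, which trivially satisfies (1)--(4) with (3) vacuous.

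For the inductive step, suppose $r=[m_1,\dots,m_k]\ne[1]$ and assume by induction that $CS(\tilde r)=\lp\tilde S_1,\tilde S_2,\tilde S_1,\tilde S_2\rp$ satisfies the analogues of (1)--(4). If $m_1\ge 2$, Proposition \ref{prop:CS-sequence}(1) says $CS(r)$ is obtained from $CS(\tilde r)$ by incrementing every entry by $1$, so I take $S_i$ to be $\tilde S_i$ with every entry incremented. Since this map is a length-preserving bijection on cyclic sequences, symmetry and the exact occurrence count pass unchanged, while the boundary values $(m_1-1)+1$ and $m_1-1$ of $\tilde S_1,\tilde S_2$ become $m_1+1$ and $m_1$.

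If $m_1=1$, Proposition \ref{prop:CS-sequence}(2) presents $CS(r)$ as the image of $CS(\tilde r)$ (or its reverse) under the block-replacement map $\phi\colon a\mapsto(2,(a-2)\langle 1\rangle)$, and the symmetry of the $\tilde S_i$'s allows one to rewrite $CS(r)=\lp\phi(\tilde S_2),\phi(\tilde S_1),\phi(\tilde S_2),\phi(\tilde S_1)\rp$. When $k=2$ (so $\tilde r=[m_2+1]$, $\tilde S_1$ empty, $\tilde S_2=(m_2+1)$), I set $S_1=(2)$ and $S_2=(m_2-1)\langle 1\rangle$ and check (1)--(4) directly. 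When $k\ge 3$, $\tilde S_1$ is nonempty and starts/ends with $m_2+2\ge 3$; define $S_1:=\phi(\tilde S_2)\cdot(2)$ and let $S_2$ be $\phi(\tilde S_1)$ with its initial $2$ removed. Since the appended $(2)$ absorbs the leading $2$ of the next $\phi(\tilde S_1)$ block, regrouping gives $\lp S_1,S_2,S_1,S_2\rp=CS(r)$; symmetry of each $S_i$ follows from the symmetry of $\tilde S_i$ combined with the palindromic form of $\phi$, and (3)--(4) hold by construction.

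The main obstacle is property (2) in the $k\ge 3$ subcase of $m_1=1$. The key structural observation is that here every $2$ in $CS(r)$ sits at the start of some $\phi$-block while every $1$ is strictly interior to a block; hence any occurrence of $S_1$ (beginning and ending with $2$) must start at a block-start, and a block-by-block match shows these occurrences correspond bijectively to occurrences of $\tilde S_2$ in $CS(\tilde r)$. For $S_2$ (beginning and ending with $1$), an occurrence beginning at the $\delta$-th $1$ inside block $j$ forces $\delta=a_j-d_1+1$, where $d_1=m_2+2$ is the first entry of $\tilde S_1$, so it requires $a_j\ge m_2+2$; by Lemma \ref{lem:properties} every entry of $CS(\tilde r)$ is either $m_2+1$ or $m_2+2$, and this inequality collapses to $a_j=m_2+2$ with $\delta=1$ uniquely determined, giving a bijection between occurrences of $S_2$ in $CS(r)$ and occurrences of $\tilde S_1$ in $CS(\tilde r)$. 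The inductive hypothesis then supplies exactly two occurrences of each of $S_1$ and $S_2$, completing the argument.
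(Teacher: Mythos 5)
Your proposal is correct and follows essentially the same route as the paper's proof: transfinite induction on $\preceq$ via the predecessor $\tilde r$ of Lemma~\ref{lem:inductive_step}, with the identical choices of $S_1,S_2$ in each case (entrywise increment when $m_1\ge 2$; $S_1=\phi(\tilde S_2)\cdot(2)$ and $S_2=\phi(\tilde S_1)$ minus its leading $2$ when $m_1=1$, $k\ge 3$). Your block-alignment justification of the occurrence count in property (2) is just a more explicit version of the paper's argument reducing occurrences of $S_i$ in $CS(r)$ to occurrences of $\tilde S_j$ in $CS(\tilde r)$.
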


\begin{proof}
The proof proceeds by transfinite induction with respect to the well-ordering $\preceq$
defined in Definition ~\ref{def:well-ordering}.
We take the case when $r=[m_1]$ as the base step.
In this case, $CS(r)=\lp m_1, m_1 \rp$ by Lemma ~\ref{lem:properties}(1).
Putting $S_1=\emptyset$ and $S_2=(m_1)$, the assertion clearly holds.
To prove the inductive step, we consider two cases separately.

\medskip
\noindent {\bf Case 1.} {\it $m_1 \ge 2$ and $k \ge 2$}.
\medskip

Put $\tilde{r}=[m_1-1, m_2, \dots, m_k]$ as in Lemma ~\ref{lem:inductive_step}.
Then clearly $\tilde{r} \prec r$.
By the inductive hypothesis,
$CS(\tilde{r})=\lp \tilde{S}_1, \tilde{S}_2, \tilde{S}_1, \tilde{S}_2 \rp$,
where $\tilde{S}_1$ and $\tilde{S}_2$ are symmetric subsequences of $CS(\tilde{r})$
such that each $\tilde{S}_i$ occurs only twice in $CS(\tilde{r})$,
$\tilde{S}_1$ begins and ends with $m_1$ (provided that $\tilde{S}_1$ is nonempty),
and such that $\tilde{S}_2$ begins and ends with $m_1-1$.
Write
\[
\tilde{S}_1=(a_1, \dots, a_{t_1}) \quad \text{\rm and} \quad
\tilde{S}_2=(a_{t_1+1}, \dots, a_{t_2}),
\]
and then take
\[
S_1=(a_1+1, \dots, a_{t_1}+1) \quad \text{\rm and} \quad
S_2=(a_{t_1+1}+1, \dots, a_{t_2}+1).
\]
Clearly $S_1$ begins and ends with $m_1+1$, and $S_2$ begins and ends with $m_1$.
Also since $\tilde{S}_1$ and $\tilde{S}_2$ are symmetric by the inductive hypothesis,
$S_1$ and $S_2$ are also symmetric.
Moreover, by Proposition ~\ref{prop:CS-sequence}(1), we have $CS(r)=\lp S_1, S_2, S_1, S_2 \rp$.
It remains to show that each $S_i$ occurs only twice in $CS(r)$.
If $S_1$ occurred more than twice in $\lp S_1, S_2, S_1, S_2 \rp$,
$\tilde{S}_1$ also would occur more than twice in $\lp \tilde{S}_1, \tilde{S}_2, \tilde{S}_1, \tilde{S}_2 \rp$,
a contradiction. Similarly, $S_2$ also occurs only twice in $CS(r)$.

\medskip
\noindent {\bf Case 2.} {\it $m_1=1$ and $k \ge 2$}.
\medskip

Put $\tilde{r}=[m_2+1, m_3, \dots, m_k]$ as in Lemma ~\ref{lem:inductive_step}.
Then clearly $\tilde{r} \prec r$.
By the inductive hypothesis,
$CS(\tilde{r})=\lp \tilde{S}_1, \tilde{S}_2, \tilde{S}_1, \tilde{S}_2 \rp$,
where $\tilde{S}_1$ and $\tilde{S}_2$ are symmetric subsequences of $CS(\tilde{r})$
such that each $\tilde{S}_i$ occurs only twice in $CS(\tilde{r})$,
$\tilde{S}_1$ begins and ends with $m_2+2$ (provided that $\tilde{S}_1$ is nonempty),
and such that $\tilde{S}_2$ begins and ends with $m_2+1$.
If $k=2$, then $r=[1,m_2]$ with $m_2 \ge 2$ and $\tilde{r}=[m_2+1]$;
so $CS(\tilde{r})=\lp m_2+1, m_2+1 \rp$ by Lemma ~\ref{lem:properties}(1).
Then take
\[
S_1=(2) \quad \text{\rm and} \quad S_2=((m_2-1) \langle 1 \rangle).
\]
On the other hand, if $k \ge 3$, then write
\[
\tilde{S}_1=(a_1, \dots, a_{t_1}) \quad \text{\rm and} \quad \tilde{S}_2=(a_{t_1+1}, \dots, a_{t_2}).
\]
Here $a_1=a_{t_1}=m_2+2 \ge 3$ and $a_{t_1+1}=a_{t_2}=m_2+1 \ge 2$.
Now take
\[
S_1=(2, b_{t_1+1}\langle 1 \rangle, 2, \dots, 2, b_{t_2}\langle 1 \rangle, 2)
\quad \text{\rm and} \quad
S_2=(b_1 \langle 1 \rangle, 2, \dots, 2, b_{t_1} \langle 1 \rangle),
\]
where $b_i=a_i-2$ for every $i$. In either case, we see that
$S_1$ begins and ends with $2=m_1+1$, $S_2$ begins and ends with $1=m_1$,
and that $S_1$ and $S_2$ are symmetric
because $\tilde{S}_1$ and $\tilde{S}_2$ are symmetric by the inductive hypothesis.
Moreover, Proposition ~\ref{prop:CS-sequence}(2) implies that
either $CS(r)=\lp S_1, S_2, S_1, S_2 \rp$ or
$CS(r)=\lp \overleftarrow{S_1}, \overleftarrow{S_2}, \overleftarrow{S_1}, \overleftarrow{S_2} \rp$,
where the symbol ``$\overleftarrow{S_i}$'' denotes the reverse of $S_i$.
But since $S_1$ and $S_2$ are symmetric, we actually have
$CS(r)=\lp S_1, S_2, S_1, S_2 \rp$ in either case.
It remains to show that each $S_i$ occurs only twice in $CS(r)$.
If $S_1$ occurred more than twice in $\lp S_1, S_2, S_1, S_2 \rp$,
$\tilde{S}_2$ also would occur more than twice in $\lp \tilde{S}_1, \tilde{S}_2, \tilde{S}_1, \tilde{S}_2 \rp$,
a contradiction.
For the assertion for $S_2$,
note that $S_2$ begins and ends with $m_2$ successive $1$'s,
and that the maximum number of consecutive occurrences
of $1$ in $\lp S_1, S_2, S_1, S_2 \rp$ is $m_2$.
So if $S_2$ occurred more than twice in $\lp S_1, S_2, S_1, S_2 \rp$,
$\tilde{S}_1$ also would occur more than twice in $\lp \tilde{S}_1, \tilde{S}_2, \tilde{S}_1, \tilde{S}_2 \rp$,
a contradiction.
\end{proof}

In order to prove Theorem ~\ref{thm:main_result}, we keep the idea of applying small cancellation theory
as in \cite[Sections ~5 and 6]{lee_sakuma}. Briefly speaking, we adopt \cite[Section ~5]{lee_sakuma} as it is
to show that the upper presentation $G(K(r))=\langle a, b \svert u_r \rangle$ with $0<r<1$
satisfies the small cancellation conditions $C(4)$ and $T(4)$.
And then we investigate properties of van Kampen's diagrams over the presentation $G(K(r))=\langle a, b \svert u_r \rangle$
with boundary label being cyclically alternating as in \cite[Section ~6]{lee_sakuma}.
Sections ~5 and 6 in \cite{lee_sakuma} are indeed
irrelevant to the modification that we are performing in the present paper.
Due to van Kampen's Lemma which is a classical result in combinatorial group theory (see \cite{lyndon_schupp}),
we obtain the fact that if a cyclically alternating word $w$ equals the identity
in $G(K(r))$, then its cyclic word $(w)$ contains a subword $z$ of $(u_r^{\pm 1})$
such that the $S$-sequence of $z$ is $(S_1, S_2, \ell)$ or $(\ell, S_2, S_1)$
for some positive integer $\ell$, where $CS(r)=\lp S_1, S_2, S_1, S_2 \rp$
is as in Proposition ~\ref{prop:decomposition}. In particular, we obtain the following.

\begin{corollary}{\cite[Corollary ~6.4]{lee_sakuma}}
\label{cor:identity}
Let $r=[m_1, m_2, \dots, m_k]$ with $0<r<1$.
For a rational number $s$ with $0<s\le 1$, if $\alpha_s$ is null-homotopic in $S^3-K(r)$,
then the following hold.
\begin{enumerate}[\indent \rm (1)]
\item If $k=1$, namely $r=[m_1]$, then $CS(s)$ contains a term bigger than or equal to $m_1$.

\item If $k\ge 2$, then $CS(s)$ contains $(S_1, S_2)$ or $(S_2, S_1)$ as a subsequence, where
$CS(r)=\lp S_1, S_2, S_1, S_2 \rp$ is as in Proposition ~\ref{prop:decomposition}.
\end{enumerate}
\end{corollary}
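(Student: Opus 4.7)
The plan is to apply directly the consequence of van Kampen's Lemma recalled in the paragraph immediately preceding the statement. What that consequence needs from us is two things about $u_s$: that it is cyclically alternating, and that it equals the identity in $G(K(r))=\langle a,b\mid u_r\rangle$. Everything else is extraction of combinatorial information.

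First I would verify that $u_s$ is cyclically reduced and cyclically alternating. This is immediate from the explicit product formula for $u_s$ recalled in Section~\ref{sec:upper_presentation}: in either parity case one has $u_s=a\hat{u}_s b^{\pm 1}\hat{u}_s^{-1}$ or $u_s=a\hat{u}_s a^{-1}\hat{u}_s^{-1}$ with $\hat u_s$ a strictly alternating product of $a^{\pm 1}$ and $b^{\pm 1}$, and this strict alternation survives passage to the cyclic word. Since $\alpha_s$ is represented by $u_s$ and is assumed null-homotopic in $S^3-K(r)$, the word $u_s$ equals the identity in $G(K(r))$. Setting $w=u_s$ in the cited fact yields a subword $z$ of $(u_r^{\pm 1})$ appearing inside the cyclic word $(u_s)$ with
\[
S(z)=(S_1,S_2,\ell)\quad\text{or}\quad S(z)=(\ell,S_2,S_1)
\]
for some positive integer $\ell$, where $\lp S_1,S_2,S_1,S_2\rp=CS(r)$ is the decomposition provided by Proposition~\ref{prop:decomposition}.

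The remaining work is to convert this $S$-sequence into the claimed information about $CS(s)$. The key observation is that every entry of $S(z)$ other than the first and the last corresponds to a maximal block of same-sign letters of $z$ that is bounded on both sides inside $z$ itself by letters of the opposite sign, so each such entry is a genuine term of $CS(s)$ occurring in the correct position and order. For~(1), with $k=1$, Proposition~\ref{prop:decomposition} forces $S_1=\emptyset$ and $S_2=(m_1)$, so the entry $m_1$ is interior to $S(z)$ and the corresponding maximal block in $(u_s)$ has length at least $m_1$; hence $CS(s)$ contains a term $\ge m_1$. For~(2), with $k\ge 2$, both $S_1$ and $S_2$ are nonempty; the interior entries of $S(z)$ realize $(S_1,S_2)$ (respectively, $(S_2,S_1)$) except for one outermost term, and the symmetry of $S_1$ and $S_2$ from Proposition~\ref{prop:decomposition}(1), together with the fact that $z$ may be traversed in either direction in the cyclic word $(u_r^{\pm 1})$, lets one align the remaining term.

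The main technical obstacle is the handling of the two boundary entries of $S(z)$: a priori they might be absorbed into longer same-sign runs of $(u_s)$, which would disturb the exact match needed to call $(S_1,S_2)$ a subsequence of $CS(s)$. This is precisely where the symmetry of $S_1,S_2$ and the dichotomy $(S_1,S_2,\ell)$ versus $(\ell,S_2,S_1)$ in the cited fact are used in tandem, and where it matters that $S_1$ and $S_2$ each occur only twice in $CS(r)$ by Proposition~\ref{prop:decomposition}(2): if the outer boundary entry on one side is absorbed, one reverses to the other case and reads the matching pattern from the opposite end. Once this bookkeeping is carried out, both conclusions of the corollary fall out directly from the structure of $S(z)$.
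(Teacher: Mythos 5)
Your derivation is exactly the paper's (implicit) argument: the paper offers no proof of this corollary beyond the phrase ``In particular, we obtain the following,'' i.e.\ it is read off directly from the van Kampen's Lemma consequence imported from \cite[Sections~5 and 6]{lee_sakuma} and stated in the preceding paragraph, applied to the cyclically alternating word $u_s$. One small slip: when $k=1$ the entry $m_1$ of $S(z)=(m_1,\ell)$ or $(\ell,m_1)$ is a boundary term, not an interior one, but this only strengthens the conclusion (the corresponding run of $(u_s)$ has length at least $m_1$), which is all that statement (1) requires.
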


\begin{remark}
{\rm
In \cite[Corollary ~6.4]{lee_sakuma}, it is mistakenly stated that
if $\alpha_s$ is null-homotopic in $S^3-K(r)$,
then $CS(s)$ contains $(S_1, S_2)$ or $(S_2, S_1)$ as a subsequence, regardless of $k \ge 1$.
It should be noted that if $k=1$ and every term of $CS(s)$ is bigger than $m_1$,
then $CS(s)$ does not contain $(S_1, S_2)$ or $(S_2, S_1)$ as a subsequence,
because, in this case, $S_1$ is empty and $S_2=(m_1)$, i.e., $(S_1, S_2)=(m_1)=(S_2, S_1)$.
}
\end{remark}

\section{New proof of Theorem ~\ref{thm:main_result}}
\label{sec:new_proof_theorem}

In this section, we prove the only if part of Theorem ~\ref{thm:main_result},
that is, we prove that for any $s\in I_1\cup I_2$,
$\alpha_s$ is not null-homotopic in $S^3-K(r)$.
The if part is \cite[Corollary ~4.7]{Ohtsuki-Riley-Sakuma}.

The following lemma which plays an important role in the proof of
Theorem ~\ref{thm:main_result} has the same statement as \cite[Lemma ~7.3]{lee_sakuma},
but is re-proved by transfinite induction.

\begin{lemma}
\label{lem:connection}
Let $r=[m_1, m_2, \dots, m_k]$ with $0<r<1$,
and let $CS(r)=\lp S_1, S_2, S_1, S_2 \rp$ be as in Proposition ~\ref{prop:decomposition}.
Suppose that a rational number $s$ with $0<s \le 1$
has a continued fraction expansion $s=[l_1, \dots, l_t]$,
where $t \ge 1$, $(l_1, \dots, l_t) \in (\mathbb{Z}_+)^t$, and $l_t \ge 2$ unless $t=1$.
Suppose also that $CS(s)$ satisfies the following condition:
\begin{enumerate}[\indent \indent \rm (i)]
\item If $k=1$, namely $r=[m_1]$, then $CS(s)$ contains a term bigger than or equal to $m_1$.

\item If $k\ge 2$, then $CS(s)$ contains $(S_1, S_2)$ or $(S_2, S_1)$ as a subsequence.
\end{enumerate}
Then the following hold.
\begin{enumerate} [\indent \rm (1)]
\item $t \ge k$.

\item $l_i=m_i$ for each $i=1, \dots, k-1$.

\item Either $l_k \ge m_k$ or both $l_k=m_k-1$ and $t>k$.
\end{enumerate}
\end{lemma}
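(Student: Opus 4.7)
The plan is to prove (1)--(3) simultaneously by transfinite induction on $r$ with respect to the well-ordering $\preceq$ of Definition~\ref{def:well-ordering}. The smallest $r$ with $0<r<1$ is $r=[2]$, which serves as the base case: here $CS(r)=\lp 2,2 \rp$, so $S_1=\emptyset$ and $S_2=(2)$, and the hypothesis reduces to $CS(s)$ containing a term $\ge 2$. By Lemma~\ref{lem:properties}, $l_1\ge m_1$ when $t=1$, and $l_1\in\{m_1-1,m_1\}$ when $t\ge 2$; in either situation conclusion (3) is immediate.

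For the inductive step, let $\tilde r$ be the predecessor of $r$ furnished by Lemma~\ref{lem:inductive_step} (so $\tilde r\prec r$ and $\tilde r\in(0,1)$, the borderline $\tilde r=1$ occurring only in the base case), and decompose $CS(\tilde r)=\lp \tilde S_1,\tilde S_2,\tilde S_1,\tilde S_2 \rp$ using Proposition~\ref{prop:decomposition}. Suppose $m_1\ge 2$ (Case 1). Proposition~\ref{prop:CS-sequence}(1) gives that each term of $CS(r)$ lies in $\{m_1,m_1+1\}$ and that $S_i$ arises from $\tilde S_i$ by adding $1$ to every term. Combining this with Lemma~\ref{lem:properties} applied to $s$, and with the facts that $S_1$ (if nonempty) begins with $m_1+1$ while $S_2$ begins with $m_1$, one finds $l_1\in\{m_1-1,m_1\}$ when $k=1$ and $l_1=m_1$ when $k\ge 2$. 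The case $k=1$, $l_1=m_1-1$ gives conclusion (3) directly (with $t\ge 2>k$); otherwise set $\tilde s:=[l_1-1,l_2,\dots,l_t]$, a canonical continued fraction of length $t$. Proposition~\ref{prop:CS-sequence}(1) applied to $\tilde s,s$ shows that $CS(\tilde s)$ is obtained from $CS(s)$ by subtracting $1$ from every term, so the contiguous occurrence of $(S_1,S_2)$ or $(S_2,S_1)$ in $CS(s)$ descends exactly to a contiguous occurrence of $(\tilde S_1,\tilde S_2)$ or $(\tilde S_2,\tilde S_1)$ in $CS(\tilde s)$. The inductive hypothesis applied to $(\tilde r,\tilde s)$ then delivers (1)--(3) for $(r,s)$ after translating indices via $\tilde m_1=m_1-1,\tilde l_1=l_1-1$ and $\tilde m_i=m_i,\tilde l_i=l_i$ for $i\ge 2$.

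In Case 2 ($m_1=1$ and $k\ge 2$), Lemma~\ref{lem:properties} says $CS(r)$ consists of $1$s and $2$s, and the endpoint data for $S_1,S_2$ from Proposition~\ref{prop:decomposition} ensures that $(S_1,S_2)$ and $(S_2,S_1)$ each contain both values; comparison with $CS(s)$ forces $t\ge 2$ and $l_1=1=m_1$. Set $\tilde s:=[l_2+1,l_3,\dots,l_t]$, a canonical continued fraction of length $t-1$. The $b\mapsto b^{-1}$ transformation behind Proposition~\ref{prop:CS-sequence}(2) presents $CS(s)$ as an alternation of separator $2$s and $1$-blocks of sizes $b'_i=a'_i-2$, where $(a'_i)$ enumerates $CS(\tilde s)$. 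Tracing through the construction of $S_1$ and $S_2$ in Proposition~\ref{prop:decomposition} shows that $S_1$ (framed by $2$s at both ends) encodes $\tilde S_2$ while $S_2$ (framed by $1$-blocks) encodes $\tilde S_1$; consequently, a contiguous occurrence of $(S_1,S_2)$ in $CS(s)$ lifts to a contiguous occurrence of $(\tilde S_2,\tilde S_1)$ in $CS(\tilde s)$, and symmetrically for $(S_2,S_1)\leftrightarrow(\tilde S_1,\tilde S_2)$. The inductive hypothesis applied to $(\tilde r,\tilde s)$, which have continued-fraction lengths $k-1$ and $t-1$, then yields (1)--(3) for $(r,s)$ via $\tilde m_1=m_2+1,\tilde l_1=l_2+1$ and $\tilde m_i=m_{i+1},\tilde l_i=l_{i+1}$ for $i\ge 2$.

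The main obstacle is the lift step in Case 2: while one side of $(S_1,S_2)$ automatically begins at a separator $2$ in $CS(s)$, the other side terminates in the middle of a $1$-block, so one must confirm that the truncated $1$-block really is the complete $b_{t_1}$-block corresponding to $a_{t_1}$ and not a prefix of a longer $1$-run. This alignment is forced by the explicit block structure of $S_1,S_2$ dictated by Proposition~\ref{prop:decomposition}(3),(4) together with the $1$-run bounds for $CS(s)$ that Lemma~\ref{lem:properties} provides once $l_1=1$ is known. Once the alignment is established, the remaining translation of conclusions from $(\tilde r,\tilde s)$ back to $(r,s)$ is routine bookkeeping on continued-fraction indices.
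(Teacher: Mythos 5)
Your proposal is correct and follows essentially the same route as the paper's own proof: transfinite induction on $\preceq$, pinning down $l_1$ via Lemma~\ref{lem:properties}, passing to the same $\tilde r$ and $\tilde s$, and transferring the subsequence condition through Proposition~\ref{prop:CS-sequence}. The only differences are cosmetic (the paper takes all $r=[m_1]$ as the base rather than just $[2]$, and treats $k=2$ in Case~2 via hypothesis (i) rather than an exact lift), plus your explicit attention to the $1$-block alignment in Case~2, a point the paper passes over silently.
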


\begin{proof}
The proof proceeds by transfinite induction with respect to the well-ordering $\preceq$
defined in Definition ~\ref{def:well-ordering}.
We take the case when $r=[m_1]$ as the base. By hypothesis (i),
$CS(s)$ contains a term bigger than or equal to $m_1$.
Then Lemma ~\ref{lem:properties} implies that
either $l_1 \ge m_1$ or both $l_1=m_1-1$ and $t \ge 2$,
so that the assertion clearly holds.
Now we prove the inductive step.
Let $\tilde{r}$ be defined as in Lemma ~\ref{lem:inductive_step}.
Then clearly $\tilde{r} \prec r$.

\medskip
\noindent {\bf Case 1.} {\it $m_1 \ge 2$ and $k \ge 2$.}
\medskip

In this case, $\tilde{r}=[m_1-1, m_2, \dots, m_k]$.
By Proposition ~\ref{prop:decomposition},
$S_1$ begins and ends with $m_1+1$, and $S_2$ begins and ends with $m_1$.
Hence if $CS(s)$ contains $(S_1, S_2)$ or $(S_2, S_1)$ as a subsequence,
then $CS(s)$ contains both a term $m_1$ and a term $m_1+1$.
By Lemma ~\ref{lem:properties}, the only possibility is that $l_1=m_1$ and $t \ge 2$.
Now let $\tilde{s}=[l_1-1, \dots, l_t]$.
Then we see from Proposition ~\ref{prop:CS-sequence}(1) that
$CS(\tilde{s})$ contains $(\tilde{S}_1, \tilde{S}_2)$ or $(\tilde{S}_2, \tilde{S}_1)$ as a subsequence,
where $CS(\tilde{r})=\lp \tilde{S}_1, \tilde{S}_2, \tilde{S}_1, \tilde{S}_2 \rp$.
By the inductive hypothesis, we have $t \ge k$, $l_i=m_i$ for each $i=1, \dots, k-1$, and
either $l_k \ge m_k$ or both $l_k=m_k-1$ and $t>k$, which proves the assertion.

\medskip
\noindent {\bf Case 2.} {\it $m_1=1$ and $k\ge 2$.}
\medskip

In this case, $\tilde{r}=[m_2+1,m_3, \dots, m_k]$.
Arguing as in Case ~1, $CS(s)$ contains both a term $m_1=1$ and a term $m_1+1=2$.
By Lemma ~\ref{lem:properties}, the only possibility is that $l_1=m_1=1$ and $t \ge 2$.
Now let $\tilde{s}=[l_2+1, \dots, l_t]$.
Then we see from Proposition ~\ref{prop:CS-sequence}(2) that
$CS(\tilde{s})$ contains a term greater than or equal to $m_2+1$ provided $k=2$
and that $CS(\tilde{s})$ contains $(\tilde{S}_1, \tilde{S}_2)$ or $(\tilde{S}_2, \tilde{S}_1)$ as a subsequence
provided $k \ge 3$, where $CS(\tilde{r})=\lp \tilde{S}_1, \tilde{S}_2, \tilde{S}_1, \tilde{S}_2 \rp$.
By the inductive hypothesis, we have $t \ge k$, $l_i=m_i$ for each $i=2, \dots, k-1$, and
either $l_k \ge m_k$ or both $l_k=m_k-1$ and $t>k$. This together with $l_1=m_1$ proves the assertion.
\end{proof}

\begin{remark}\rm
\label{rem:connection}
We can easily see that the a rational number $s$ with $0<s\le 1$
satisfies the conclusion of Lemma ~\ref{lem:connection}
if and only if $s$ lies in the open interval
$(r_1,r_2)=(0,1]-(I_1\cup I_2)$,
where $r_1$ and $r_2$ are rational numbers
such that $I_1=[0,r_1]$ and $I_2=[r_2,1]$,
introduced in Section ~\ref{sec:main_statement}.
\end{remark}

We are now in a position to prove the only if part of Theorem ~\ref{thm:main_result}.

\begin{proof} [Proof of the only if part of Theorem ~\ref{thm:main_result}]
Since the exceptional cases $r=\infty$ and $r=1$ can be treated in the same way as in \cite[Section ~7]{lee_sakuma},
we assume $0<r<1$.
Consider a $2$-bridge link $K(r)$, and pick a rational number $s$ from $I_1\cup I_2$.
Suppose on the contrary that $\alpha_s$ is null-homotopic in $S^3-K(r)$, namely $u_s=1$ in $G(K(r))$.
If $0<s\le 1$, then by Corollary ~\ref{cor:identity},
$CS(s)$ contains a term greater than or equal to $m_1$ provided $r=[m_1]$
or otherwise $CS(s)$ contains $(S_1, S_2)$ or $(S_2, S_1)$ as a subsequence,
where $CS(r)=\lp S_1, S_2, S_1, S_2 \rp$ as in Proposition ~\ref{prop:decomposition}.
But then by Lemma ~\ref{lem:connection} together with Remark ~\ref{rem:connection},
we have $s \notin I_1\cup I_2$, a contradiction.
So the only possibility is $s=0$.
Then, as mentioned at the end of Section ~\ref{sec:new_proof_small_cancellation}
(also see \cite[Theorem ~6.3]{lee_sakuma}),
$u_s$ must contain a subword $z$ of $(u_r^{\pm 1})$
such that the $S$-sequence of $z$ is
$(S_1, S_2, \ell)$ or $(\ell, S_2, S_1)$
for some positive integer $\ell$.
Note that the length of such a subword $z$ is strictly greater than
$p$, half the length of $(u_r^{\pm 1})$, where $r=q/p$.
Since $0<r<1$, we have $p\ge 2$.
So, the word $u_0=ab$ cannot contain such a subword, a contradiction.
This completes the proof of the only if part of Theorem ~\ref{thm:main_result}.
\end{proof}

\section*{Acknowledgement}
The authors are heartily grateful to Makoto Sakuma and an anonymous referee
for their valuable comments and suggestions
which led to an improvement of this paper.

\bibstyle{plain}

\bigskip


\begin{thebibliography}{3}

\bibitem{lee_sakuma}
D.\ Lee and M.\ Sakuma,
Epimorphisms between 2-bridge link groups:
homotopically trivial simple loops on 2-bridge spheres,
Proc. London Math. Soc. {\bf 104} (2012), 359--386.

\bibitem{lyndon_schupp}
R.C.\ Lyndon and P.E.\ Schupp,
Combinatorial Group Theory,
Springer-Verlag, Berlin, 1977.

\bibitem{Ohtsuki-Riley-Sakuma}
T.\ Ohtsuki, R.\ Riley, and M.\ Sakuma,
Epimorphisms between 2-bridge link groups,
Geom. Topol. Monogr. {\bf 14} (2008), 417--450.

\bibitem{Riley}
R.\ Riley,
Parabolic representations of knot groups. I,
Proc. London Math. Soc. {\bf 24} (1972), 217--242.

\bibitem{Schubert}
H.\ Schubert,
Knoten mit zwei Br\"ucken,
Math. Z. {\bf 65} (1956), 133--170.
\end{thebibliography}
\end{document}